\numberwithin{equation}{section}
\newcommand{\real}{\mathds{R}}
\newcommand{\integer}{\mathds Z}
\newcommand{\Ee}{\mathds E}
\newcommand{\Pp}{\mathds P}
\newcommand{\I}{\mathds 1}
\newcommand{\Ff}{\mathcal{F}}
\newcommand{\LL}{\mathcal{L}}
\newcommand{\Bb}{\mathcal{B}}
\def\1{1\!\!\hbox{{\rm I}}}
\newcommand{\rd}{{\mathds{R}^d}}
\newcommand{\Aa}{\mathcal{A}}
\numberwithin{equation}{section}
\theoremstyle{plain}
\newtheorem{theorem}{Theorem}
\newtheorem{lemma}{Lemma}
\theoremstyle{definition}
\newtheorem{remark}{Remark}
\newtheorem{example}{Example}
\numberwithin{theorem}{section}
\numberwithin{definition}{section}
\numberwithin{corollary}{section}
\numberwithin{remark}{section}
\numberwithin{lemma}{section}
\begin{document}

\title{On ergodic property of some  L\'evy-type processes in $\rd$}

\date{}

\author[Y. Mokanu]{Yana Mokanu}
\address[Y. Mokanu]{I. Sikorsky Kyiv Polytechnic Institute\\ Dep. of Mathematical Analysis and Probability Theory\\ Beresteyskyi Ave. 37, 03056 Kyiv, Ukraine\\
	\texttt{Email:  $yana.mokanu3075$@$gmail.com$} }

\begin{abstract}
	In this paper, we investigate the ergodicity in total variation of the process $X_t$ related to some integro-differential operator with unbounded coefficients and describe the speed of convergence to the respective invariant measure. Some examples are provided. 
\end{abstract}

\subjclass[2010]{60G51, 37A50, 60G17.}

\keywords{Ergodicity,  L\'evy-type process, Lyapunov criterion, Lyapunov function.} 
\maketitle

\section*{Introduction} 
Consider an integro-differential operator 
\begin{equation}\label{L01}
	\begin{split}
	Lf(x) &=  \ell(x)\cdot \nabla f(x)
	+ \int_{\rd \backslash \{0\}} \left(f(x+u) - f(x) -\nabla f(x) \cdot u \I_{|u|< 1} \right) \nu(x,du), 
	\end{split}
\end{equation}
defined on the set $C_0^2(\rd)$ of twice continuously differentiable functions with compact support, where  $\ell(\cdot): \rd \to \rd$, and  the  Lévy-type  kernel  $\nu(x,du)$ satisfies 
\begin{equation*}\label{ubdd}
\int_{\rd\backslash \{0\}} (1\wedge |u|^2) \nu(x,du)<\infty \quad \text{for any $x\in \rd$}. 
\end{equation*}
To simplify the notation, we assume that the kernel $\nu(x,du)$ does not have an atom at 0 and write $\int_\rd$ instead of $\int_{\rd\backslash \{0\}}$. 

Under certain conditions (see Section~\ref{Prel}  below) one can associate with $(L, C_0^2(\rd))$  a  Markov process $X$, whose transition  probability kernel $P_t(x,dy)$  possesses certain regularity properties.  The  process $X$ is often called a \emph{L\'evy-type} process, because locally its structure resembles the structure of a L\'evy process, e.g. a  process with independent  stationary  increments.

The aim of this paper is to derive sufficient conditions for ergodicity of the process $X$ related to \eqref{L01} and find the ergodic rates. 

This work further develops our previous studies on the ergodicity of Lévy-type processes: we began our investigation in \cite{KM24} with a one-dimensional case and bounded Lévy-type kernel; continued in  \cite{KM25},  where a Lévy-driven stochastic differential equation (SDE) with unbounded coefficients was considered in dimension $d$.

The long-time behaviour of Markov processes has been extensively investigated. Skorokhod in \cite{Sk87} discussed a long-time behaviour of general Markov processes and solutions to SDEs, as well as stabilisation properties of solutions to systems of SDEs. Masuda in \cite{Ma04}, \cite{Ma07} investigated  a L\'evy-driven Ornstein-Uhlenbeck process, in particular, the existence of a smooth density, ergodicity and convergence rates.  Sufficient conditions for recurrence, positive recurrence and exponential ergodicity of one-dimensional L\'evy-type processes generated by the integro-differential operator were derived in \cite{Wa08}. In \cite{Wa13} sufficient conditions for exponential and strong ergodicity of stochastic differential equations driven by symmetric $\alpha-$stable processes were obtained.  Ergodicity of a solution to the jump-diffusion SDE  was studied by Wee in \cite{We99}. In \cite{FR05}  and \cite{DFG09}  sufficient conditions for ergodicity were given in the form of so-called $(f,r)$-modulated moments.  In \cite{Ku15} the problem of the uniqueness of an invariant probability measure of a Markov process was investigated; here the authors also derived sufficient conditions for the weak convergence of the transition probability to the invariant measure.  Kulik in  \cite{Ku18} discussed the asymptotic behaviour of a wide range of Markov processes from simple Markov chains with finite state spaces to more sophisticated models  (including solutions to L\'evy driven SDEs), in particular, the author examined  different types of ergodic rates  and their further application in limit theorems;  the question of the weak ergodicity was addressed as well. Sandri\'c in \cite{Sa13} gave sufficient conditions for recurrence, transience and ergodicity of stable-like processes in terms of their coefficients. Further, sufficient conditions for recurrence, transience and ergodicity of stable-like Markov chains under certain assumptions on their density functions and drift were derived in \cite{Sa14}. In \cite{Sa16} the author discussed recurrence, transience, ergodicity, strong, (sub)-exponential ergodicity and mixing properties of L\'evy-type processes driven by the infinitesimal generator. 

The structure of this paper is as follows.  In Section~\ref{Prel},  we recall the notion of ergodicity and  discuss the assumptions.  In Section~\ref{Set} we formulate our results. Proofs are given in Section~\ref{Proofs} and examples in Section~\ref{Examples}.

\section{Preliminaries }\label{Prel}

A Markov process $(X_t)_{t\geq 0}$  on $\rd$  with a transition probability kernel $P_t(x,dy)$ is called (strongly)
\emph{ergodic} (see \cite{Sa16}) if  there exists an  invariant probability measure $\pi(\cdot)$ such that 
\begin{equation*}\label{inv1}
	\lim_{t\to \infty} \| P_t(x,\cdot)- \pi(\cdot)\|_{TV} =0
\end{equation*}
for any $x\in \real^d$.
Here $\|\cdot\|_{TV}$ is the total variation norm. Moreover, one can describe the speed of convergence to the invariant measure, if certain assumptions are satisfied. The process is called  (non-uniformly) $\psi$-ergodic  (see \cite{Sa16} for polynomial and exponential rates), if there exists a function $\psi\geq 0$, $\psi(t) \to 0$ as $t\to\infty$, such that 
\begin{equation}\label{psierg}
	 \|P_t(x,\cdot )- \pi(\cdot)\|_{TV}\leq C(x) \psi(t), \quad t\geq 1, 
\end{equation}
 for some $C \colon \rd \to [0,\infty)$. 

There are several possible ways to show ergodicity of a Markov process (see \cite[\S3.5]{Ku18}). We use the skeleton chain approach, described in \cite{Ku18}, which allows us to reduce the continuous-time setting to the discrete one by constructing a skeleton chain $X^h :=\{X_{nh}, n \in \integer_+\}$ and apply ergodic theorems, developed specifically for Markov chains. Under certain irreducibility assumptions (e.g., Dobrushin condition), ergodicity of the skeleton chain implies ergodicity of the initial process with the same convergence rate (cf. \cite[\S3.2]{Ku18}, \cite{Sa16}).

To see if the chain is ergodic, one can check the (local) Dobrushin (or other forms of irreducibility condition, suitable for a particular task (see the discussion in \cite[p.64]{Ku18})) and the Lyapunov-type conditions (\cite[Th.2.8.9, Th.3.2.3]{Ku18}).  Note that the irreducibility-type assumption is crucial for the described approach, but if it fails, it is still possible for the process to be ergodic only in a weaker sense (see the discussion in \cite[\S4]{Ku18}).

A  Markov chain $X$ with a transition kernel $\Pp(x,dy)$  satisfies the \textit{local Dobrushin} condition (cf. \cite[p. 42]{Ku18}) on a measurable set $B\subset \rd\times \rd$  if 
\begin{equation}\label{Dor1}
	\sup_{x,y\in B} \|\Pp(x,\cdot)- \Pp(y,\cdot)\|_{TV} <2. 
\end{equation}

In practice, it is sufficient and reasonable to check if there exists an absolutely continuous component of $\Pp(x,dy)$, such that $x\mapsto \Pp(x,\cdot)$ is continuous in $L_1$ (see \cite[Prop.2.9.1]{Ku18}), instead of proving \eqref{Dor1} directly. This, in turn, is the case if the initial Markov process possesses a strictly positive transition probability density $p_t(x,\cdot)$,  such that $x\mapsto p_t(x,\cdot)$ is continuous in $x$ in $L_1$  (see \cite{Sa16}). In other words, it means that the respective semigroup $(P_t)_{t\geq 0}$, $P_t f(x):= \Ee^x f(X_t)$, possesses the $C_b$-Feller property,  that is, it preserves the space of bounded continuous functions (see \cite{BSW13}).  The Dobrushin condition ensures that the process converges to the same invariant measure, regardless of its starting point. 

The \textit{Lyapunov-type} condition (see \cite[p.52]{Ku18}) allows us to establish moment estimates, responsible for (non-uniform) ergodic rates. Suppose that there exist:
\begin{itemize}
	\item   a  norm-like function $V: \, \rd\to [1,\infty)$ (i.e. $V(x)\to \infty$ as $|x|\to  \infty$),   bounded on a compact $K$; 
	
	\item  a function $f: \, [1,\infty) \to (0,\infty)$, that  admits a non-negative increasing and concave extension to $[0,\infty)$;
	
	\item  a constant $C>0$,
\end{itemize}
such that the following  relations  hold:
\begin{equation}\label{Lyap1}
	\Ee^x V(X_1) - V(x) \leq - f (V(x))+ C, \quad x\in \rd, 
\end{equation}
\begin{equation*}
	f\left(1+ \inf_{x\notin K} V(x)\right) > 2C.  
\end{equation*}
Inequality \eqref{Lyap1} is called the \emph{Lyapunov inequality}, and such  a function $V$ is called a \emph{Lyapunov function}.  

Given that we are in the continuous-time settings, it is reasonable to consider the Lyapunov-type condition in terms of the process $X$ itself, namely, in terms of its \emph{full} generator $\LL$:

\begin{equation}\label{Lyap2}
	\LL  V(x) \leq - f(V(x))  + C,
\end{equation}
 where $f, V$ and $C$ have the same meaning as above. 

The full generator is taken into consideration because Lyapunov functions are often unbounded and the domain of $L$ is too narrow to include them. Recall the ``stochastic version'' of a \emph{full generator}, see \cite[(1.50)]{BSW13}. The full generator of a measurable contraction semigroup $(P_t)_{t\geq 0}$  on  the space $C_\infty(\real^d)$ of continuous functions vanishing at infinity  is 
    \begin{gather*}
        \hat{L}:= \Big\{ (f,g)\in(\Bb(\rd),\Bb(\rd)) :\quad   M_t^{[f,g]}:=f(X_t)-f(x)= \int_0^t g(X_s)ds,\quad x\in \rd,\, t\geq 0,\Big.\\
		\Big.\text{is a local  $\Ff_t^X:= \sigma(X_s, s\leq t)$ martingale} \Big\}.
    \end{gather*}
Note that in general $\hat{L}$ is not single-valued, see the discussion in \cite[p.36]{BSW13}. 

Since  for $f\in C_0^2(\rd)$  
\begin{equation}\label{gen1}
	P_t f(x) -f(x) = \int_0^t P_s Lf(x) ds \quad \forall t\geq 0, \,x\in \rd,
\end{equation}
see \cite[Ch.1, Prop.1.5]{EK86},  by the strong Markov property and the Dynkin formula $(f,Lf)\subset \hat{L}$ for any  $f\in C_0^2(\rd)$.
Further,   it is not difficult to check that 
 $Lf(x)<\infty$ for $f\in \mathcal{D}_0$,  where 
\begin{equation*}
	\mathcal{D}_0:=\left\{ f\in C^2(\rd): \quad\left| \int_{|u|\geq 1} (f(x+u)-f(x))\nu(x,du)\right|<\infty \quad \forall x\in \rd \right\}. 
\end{equation*}
Using the localization procedure, the Ito formula  and taking the expectation,  we derive  \eqref{gen1} for any $f\in \mathcal{D}_0$ and, consequently, we get that   $(f,Lf)\in \hat{L}, \,  f\in \mathcal{D}_0$.  
We denote this extension of $L$ to $\mathcal{D}_0$ by $\LL$.

Summarising, condition \eqref{Lyap2} for the initial process implies \eqref{Lyap1} for its skeleton chain (see \cite[Th.3.2.3]{Ku18}), which along with the Dorbrushin condition ensures the ergodicity  in total variation  of this chain and provides the (non-uniform) ergodic rate (cf. \cite[Th.2.8.6, Th.2.8.8.]{Ku18}). As it was mentioned above, the ergodicity of the chain implies the ergodicity of the initial process with the same ergodic rate.

The speed of  convergence to $\pi$  in \eqref{psierg} as $t\to \infty$ is (cf. \cite[Th.2.8.8, Th.3.2.3]{Ku18})
\begin{equation}\label{psi}
	\psi (t) := \left(\frac{1}{f(F^{-1}(\gamma t))}\right)^\delta, \qquad F(t) := \int\limits_1^t \frac{dw}{f(w)}, \quad t \in (1, \infty),
\end{equation}
where $\gamma,\delta\in (0,1)$. Moreover, 
\begin{equation*}
	\int_{\rd} f(V(x)) \pi (dx) < \infty. 
\end{equation*}

Our standing assumptions in this paper  would be 

\begin{align}\label{A1}\tag{\bfseries{A1}}
	&\text{There exists a   solution $\{X_t, \,t>0\}$ to the martingale problem for $(L, C_0^2(\rd))$; }\\ \label{A2}\tag{\bfseries{A2}}
	&
	\text{The  skeleton chain  $X^h := \{ X_{nh}, \, n\in \mathbb{Z}_+\}$ satisfies  the Dobrushin condition.}	
\end{align}

Condition \eqref{A1} assures the existence of the process associated with \eqref{L01} (which we call the \emph{Markovian selection} related to $L$), but it looks rather  abstract and is not easy to check. The sufficient conditions ensuring \eqref{A1} were obtained in \cite{Ku20} and are the following:
\begin{align}\label{A11}\tag{\bfseries{A1'}}
	&\text{The symbol  $q$ is locally bounded,  continuous in $x$, satisfies $q(x, 0) = 0$;  }\\ \label{A111}\tag{\bfseries{A1''}}
	&
	\lim_{R\to \infty}  \sup_{|x|\leq R}\sup_{|\xi|\leq R^{-1}} |q(x,\xi)| =0.
\end{align}
Recall that the symbol $q(x,\xi)$ of the operator $L$ is a function, such that for $f$ from the Schwartz space $S(\rd)$  
$$
	L f(x) = - \int_\rd  e^{i\xi \cdot x} q(x,\xi)\hat{f}(\xi)d\xi, 
$$
where $\hat{f}$ is the Fourier transform of $f$, and
\begin{align*}
	q(x,\xi)&= - i\ell(x)\cdot \xi  
	+ \int_\rd \left(1-e^{i\xi \cdot x} + i \xi \cdot u \I_{|u|\leq 1}  \right) \nu(x,du). 
\end{align*}

Under \eqref{A11} and \eqref{A111}, there exists a solution to the Martingale problem for $(L, C_0^\infty(\rd))$, which is strongly Markov, see \cite[Th.1.1, Cor.4.1]{Ku20}. 

The uniqueness of the process in the case of unbounded coefficients is harder to prove. Up to our knowledge, at the moment  there are no  general results establishing the uniqueness of the solution to the Martingale problem for $(L,C_0^\infty(\rd))$, if the kernel $\nu(x,du)$ is unbounded, unless one can rewrite $L$ (using the change of measure) in the form, usually used in L\'evy-driven SDEs, and check the conditions for the existence and uniqueness of the (strong or weak) solution to the respective SDE. We do not pursue this topic in the current article and confine ourselves with finding the conditions that guarantee  the ergodicity of \emph{some} Markovian selection, related to $(L,C_0^\infty(\rd))$. 

 It is a kind of general knowledge that ergodicity can come from the drift or diffusion parts of the generator.  We investigate the conditions on the L\'evy-type measure under which \eqref{Lyap2} is satisfied.  Our interest lies both in the drift- and noise-induced ergodicity. Further, we will show that in the second case, when the ergodic behaviour comes from the kernel  $\nu(x,\cdot)$ itself,  the  support of $\nu(x,\cdot)$ must depend on the direction of the vector $x$.

\section{Settings and results}\label{Set}

Denote by $|u|$ the norm of a vector $u\in \rd$;  by $\Bb(r)$ and $\Bb^c(r)$ the ball centred at $0$ with radius $r$ and its complement, respectively; by $\Aa(r,R)$ the annulus with the inner radius $r$ and the outer radius $R$. 

As a Lyapunov function we take
\begin{equation}\label{Vp}
	V (x)= 1+\phi^p(x), \quad p\in (0,1), \quad x\in \rd,
\end{equation}
where $\phi: \rd\to \real_+$, $\phi\in C^2(\rd)$, such that $\phi(x) =|x|$ for $|x|>1$ and $\phi(x)\leq |x|$  for $|x|\leq 1$.

We also denote by $e_x = \frac{x}{\|x\|}$ the unit vector of $x \in \rd$, and by $\gamma_{x,u}$ the cosine of the angle between unit vectors $e_x, e_u$:
\begin{equation*}\label{gxu}
	\gamma_{x,u}:= \cos \angle (e_x,e_u). 
\end{equation*}

The aim of the article is to find conditions ensuring the Lyapunov-type inequality \eqref{Lyap2} for $\LL$. Our strategy is to select its negative and positive components to compare their impact on the generator. For the sake of simplicity, we consider separately the ``drift'' and ``kernel'' parts:
\begin{gather*}\label{split1}
    \LL V(x) = \nabla V(x)\cdot \Big(\ell(x) + \int\limits_{\Aa(1, |x|)} u\nu(x,du) \Big) \, +\\
    + \int\limits_{\rd} \left(V(x+u)- V(x) - \nabla V(x) \cdot u\I_{|u|\leq |x|}\right) \nu(x,du) =:\notag \\
		=: \LL^{drift} V(x)+ \LL^{ ker} V(x). \notag
\end{gather*}

Consider the ``drift'' part. For $|x|$ big enough $V(x) = 1 + |x|^p$, so we can transform
\begin{equation*}
	\begin{split}
		\LL^{drift} V(x) 
		&= p |x|^{p-1}\Big(|\ell(x)|\gamma_{x,\ell(x)} + \int\limits_{\Aa(1, |x|)} \gamma_{x,u} |u|\,\nu(x,du)\Big).  
	\end{split}
\end{equation*}

Since our ultimate goal is to compare the speed of all generator components, we define the first ``growth'' function
\begin{equation*}
	\phi^{drift}(x) := |x|^{p-1}\max{\Big(|\ell(x)|,\int_{\Aa(1, |x|)} |u|\,\nu(x,du)\Big)}.
\end{equation*}
It is not hard to see that there exists $\zeta \in [-2,2]$, such that
\begin{equation}\label{drift}
	\LL^{drift} V(x) \leq -p\zeta \phi^{drift}(x).
\end{equation}

Additionally,  fix $\lambda \in (0,1)$ and split the ``kernel'' part $\LL^{ker}$ into the ``ball'' and  ``tail'' sub-components, based on the jump size $|u|$:
\begin{gather}\label{Lsplit}
    \LL^{ker}V(x)
		= \int\limits_{\Bb(|x|^\lambda)} \left(V(x+u) - V(x) - \nabla V(x) \cdot u \right) \nu(x,du) \, +\\
		+ \int\limits_{\Bb^c(|x|^\lambda)} \left(V(x+u) - V(x)  - \nabla V(x) u \I_{\Aa(|x|^\lambda, |x|)}\right)\nu(x,du) =:\notag\\
		=: \LL^{ball}V(x)+\LL^{tail}V(x).\notag
\end{gather}

Further, again according to the jump size $|u|$, split the ``tail'' part:
\begin{gather*}\label{tailsplit}
    \LL^{tail}V(x)
		= \int\limits_{\Aa(|x|^\lambda,|x|)} \left(V(x+u) - V(x) - \nabla V(x)\cdot u \right) \nu(x,du) \, +\\
		+ \int\limits_{\Bb^c(|x|)} \left(V(x+u) - V(x) \right)\nu(x,du) =:\notag\\
		=: \LL^{big}V(x)+\LL^{large}V(x). \notag
\end{gather*}

Now we are ready to introduce the rest of the ``growth'' functions:
\begin{equation*}\label{phiPM}
	\phi^{ball}_+(x) := |x|^{p-2} \int\limits_{\Bb(|x|^\lambda)}|u|^2 \, \nu(x,du), \qquad \phi^{ball}_-(x) :=  |x|^{p-2} \int\limits_{\Bb(|x|^\lambda)}|u|^2 \gamma_{ x,u}^2\, \nu(x,du),
\end{equation*}
and
\begin{equation*}\label{phiBL}
	\phi^{big}(x) :=\frac{p}{2}|x|^{p-1} \int\limits_{\Aa(|x|^{\lambda},|x|)} |u| \, \nu(x,du),
	\qquad \phi^{large}(x) := \int\limits_{\Bb^c(|x|)} |u|^p \, \nu(x,du).
\end{equation*}

Therefore, we arrive at the following result.
\begin{lemma}\label{lem1}
	For $V$ from \eqref{Vp} and $\zeta \in [-2,2]$:
	\begin{equation}\label{L30}
		\LL V(x)
		\leq  -p\zeta \phi^{drift}(x)+\frac{p}{4}(1+o(1))(\phi^{ball}_+(x)+(p-2)\phi^{ball}_-(x)) + \phi^{big}(x)+\phi^{large}(x), \quad   |x| \to \infty .
	\end{equation}
\end{lemma}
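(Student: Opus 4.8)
Since the decomposition $\LL V=\LL^{drift}V+\LL^{ball}V+\LL^{big}V+\LL^{large}V$ is already at hand and the drift part is controlled by \eqref{drift}, the plan is to bound each of the three ``kernel'' pieces by the corresponding growth function, using for each a Taylor-type estimate of the increment $V(x+u)-V(x)-(\text{correction})$ adapted to the relevant range of the jump size $|u|$. Throughout one takes $|x|$ so large that $|x|-|x|^\lambda>1$; then $V(y)=1+|y|^p$, with $\nabla V(y)=p|y|^{p-2}y$ and $D^2V(y)=p|y|^{p-2}I+p(p-2)|y|^{p-4}yy^\top$, on the whole set $\{x+u:|u|\le|x|\}\subset\Bb^c(1)$, so the global $C^2$-patching of $\phi$ plays no role.

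For $\LL^{ball}V(x)$ the jumps obey $|u|<|x|^\lambda=o(|x|)$. I would use the second-order Taylor formula with Lagrange remainder: there is $\theta=\theta(x,u)\in(0,1)$ with
\[
V(x+u)-V(x)-\nabla V(x)\cdot u=\tfrac12\,u^\top D^2V(x+\theta u)\,u=\tfrac p2\,|x+\theta u|^{p-2}|u|^2\bigl(1+(p-2)\gamma_{x+\theta u,u}^2\bigr).
\]
Since $|u|/|x|\le|x|^{\lambda-1}\to0$ uniformly over $\Bb(|x|^\lambda)$, one gets $|x+\theta u|^{p-2}=|x|^{p-2}(1+o(1))$ and, writing $\gamma^2_{x+\theta u,u}=(\gamma^2_{x,u}+w)/(1+w)$ with $|w|=O(|x|^{\lambda-1})$, also $\gamma^2_{x+\theta u,u}=\gamma^2_{x,u}+o(1)$, uniformly. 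Substituting and integrating against $\nu(x,du)$, and recognising $\phi^{ball}_\pm$, reproduces the contribution $\tfrac p4(1+o(1))\bigl(\phi^{ball}_+(x)+(p-2)\phi^{ball}_-(x)\bigr)$ to \eqref{L30}.

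For $\LL^{big}V(x)$, where $|x|^\lambda\le|u|\le|x|$, a second-order expansion is too crude. Instead I would invoke concavity of $r\mapsto r^p$ on $[0,\infty)$, $|x+u|^p\le|x|^p+p|x|^{p-1}(|x+u|-|x|)$, combined with $\sqrt{1+t}\le1+\tfrac t2$ ($t\ge-1$) applied to $|x+u|=|x|\sqrt{1+(2x\cdot u+|u|^2)/|x|^2}$, which gives $|x+u|-|x|-e_x\cdot u\le|u|^2/(2|x|)$. Hence $V(x+u)-V(x)-\nabla V(x)\cdot u\le\tfrac p2|x|^{p-2}|u|^2\le\tfrac p2|x|^{p-1}|u|$ on the annulus, and integration yields $\LL^{big}V(x)\le\phi^{big}(x)$. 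For $\LL^{large}V(x)$, where $|u|\ge|x|$, subadditivity $(|x|+|u|)^p\le|x|^p+|u|^p$ gives $V(x+u)-V(x)\le|u|^p$, hence $\LL^{large}V(x)\le\phi^{large}(x)$. Adding the four estimates produces \eqref{L30}.

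The ``big'' and ``large'' pieces, together with \eqref{drift}, are routine one-line convexity estimates. The main obstacle is the ``ball'' piece: one must make both expansions ($|x+\theta u|^{p-2}$ and $\gamma^2_{x+\theta u,u}$) uniform in $u\in\Bb(|x|^\lambda)$ and — more delicately — check that the error attached to $\gamma^2_{x+\theta u,u}$, which carries only the factor $|u|^2$ rather than the full integrand $1+(p-2)\gamma^2_{x,u}$, is negligible after integration against $\nu(x,\cdot)$, so that it can be absorbed into the overall $o(1)$; this is precisely where the restriction $\lambda<1$ enters.
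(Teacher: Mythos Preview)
Your argument is essentially the paper's: a second-order Taylor expansion for $\LL^{ball}V$ (the paper uses the integral remainder $\int_0^1(1-\theta)\,u^\top D^2V(x+\theta u)\,u\,d\theta$ and extracts explicit functions $C_1(x),C_2(x)\to 1$ in place of your Lagrange point-form and bare $o(1)$), together with elementary concavity bounds for $\LL^{big}V$ and $\LL^{large}V$ that differ from the paper's only cosmetically (the paper applies $(1+t)^{p/2}-1\le \tfrac p2 t$ directly to $|x+u|^2/|x|^2$, rather than your two-step $r^p$-concavity plus $\sqrt{1+t}\le 1+\tfrac t2$). One slip to watch: your Lagrange remainder $\tfrac12\,u^\top D^2V(x+\theta u)\,u$ integrates to $\tfrac p2(1+o(1))\bigl(\phi^{ball}_+(x)+(p-2)\phi^{ball}_-(x)\bigr)$, not the $\tfrac p4$ appearing in \eqref{L30}.
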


Inequality \eqref{L30} gives us a first vision on how the final condition may look. Indeed, consider the right-hand side of \eqref{L30}. The last two terms (which relate to ``tail'') are positive by construction. Consequently, the negativity of $\LL V$ can come from either the ``drift'' or ``ball'' components, or a combination thereof. According to this, we introduce several constants that illustrate the interconnection of the corresponding ``growth'' functions.

First, it can be seen from \eqref{L30} that $\phi^{ball}_+$ and $\phi^{ball}_-$ correspond to the positive and negative parts of ``ball' respectively thus, the sign of $\LL^{ball} V$ is fully determined by their relation. Define
\begin{equation*}\label{S}
	C^{ball}:= 	\limsup_{|x| \to \infty} \frac{\phi^{ball}_+(x)}{\phi^{ball}_-(x)}.
\end{equation*}
By definition $C^{ball}\geq 1$ and it has to be less than $2-p$ for $\LL^{ball}V$ to be negative. In other words, it means that the measure $\nu(x,\cdot)$ has to be concentrated along the direction, collinear to $x$.

To compare ``drift'' with ``tail'' we introduce
\begin{equation*}
	C^{big}:= \limsup_{|x| \to \infty} \frac{\phi^{big}(x)}{\phi^{drift}(x)} \quad \text{and} \quad
	C^{large}:=  \limsup_{|x| \to \infty} \frac{\phi^{large}(x)}{\phi^{drift}(x)}. 
\end{equation*} 
In the same vein, the constant
\begin{equation}\label{Sb}
	C^{tail} := \limsup_{|x|\to\infty} \frac{\max{(\phi^{big}(x), \, \phi^{large}(x))}}{\phi^{ball}_-(x)}.
\end{equation}
shows the relation between ``ball'' and ``tail''.

In both cases, finite constants ensure that ``tail'' does not spoil the negativity of $\LL V$, and under additional assumptions one can get either ``drift''- or ``ball''-induced ergodicity (respectively). 
\begin{remark}
	In \eqref{Sb} we do not consider the constant, related to $\phi^{ball}_+$, because in the case of ``ball''-induced ergodicity, the strictly negative component of ``ball'' (that is $\phi^{ball}_-$) has to dominate.
\end{remark}

Finally, to compare ``drift'' and ``ball'', we define 
\begin{equation*}
	\tilde{C}^{drift}_+:= \limsup_{|x|\to\infty} \frac{\phi^{ball}_+(x)}{\phi^{drift}(x)} \quad \text{and} \quad
	\tilde{C}^{drift}_-:= \liminf_{|x|\to \infty} \frac{\phi^{ball}_-(x)}{\phi^{drift}(x)}.
\end{equation*}

Note that the functions $\phi^{drift}$ and $\phi^{ball}_-$ are not necessarily monotone; thus, in order to formulate our main result in a more ``user-friendly'' manner, we assume that they can be bounded from below by some power-type functions. Namely, suppose that for $|x|$ big enough there exist some strictly positive $K^{drift}, K^{ball}$, and $\gamma^{drift}$, $\gamma^{ball}$, such that the following inequalities hold:
\begin{equation}\label{gamdriftball}
	\phi^{drift}(x)\geq  K^{drift} |x|^{\gamma^{drift}}, \qquad
	\phi^{ball}_{-}(x)\geq  K^{ball} |x|^{\gamma^{ball}}.
\end{equation}

Now we are ready to formulate the main results of the paper.

\begin{theorem}\label{T2}
	Case 1. (\textit{``Drift-induced'' ergodicity}). Suppose that  \eqref{drift}  holds with  $\zeta \in (0,2]$, and $\tilde{C}^{drift}_+$, $\tilde{C}^{drift}_-$, $C^{big}$, $C^{large}$ are finite. Then \eqref{Lyap2} holds with $f(u) =   Ku^{\gamma^{drift}/p}, K > 0$, if
	\begin{equation}\label{LI1}
		- p\zeta + \frac{p}{4}\left(\tilde{C}^{drift}_+ +(p-2)\tilde{C}^{drift}_-\right) + C^{big}+C^{large} < 0.
	\end{equation}			
	
	Case 2. (\textit{``Ball-induced'' ergodicity}). Suppose that $C^{ball}, C^{tail} < \infty$. Then \eqref{Lyap2} holds with $f(u) =  Ku^{\gamma^{ball}/p}, K \!>\!0$, if either
	
	a) $\tilde{C}^{drift}_- = \infty$, $\zeta \in [-2,2]$ and
	\begin{equation}\label{LI2a}
		\frac{p}{4}(C^{ball} + p-2)+ 2C^{tail}<0.
	\end{equation}	
	or
	
	b) $\tilde{C}^{drift}_- \in (0,\infty)$, $\zeta \in [-2,0]$ and
	\begin{equation}\label{LI2b}
		-\frac{p\zeta}{\tilde{C}^{drift}_-} + \frac{p}{4}(C^{ball} + p-2)+ 2C^{tail}<0.
	\end{equation}	
\end{theorem}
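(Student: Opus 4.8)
The plan is to start from the asymptotic upper bound \eqref{L30} of Lemma~\ref{lem1} and, in each case, extract a bound of the form $\LL V(x) \le -f(V(x)) + C$ with the claimed $f$. The common scheme is: (i) fix a large radius $R$ so that for $|x|\ge R$ all the $\limsup$/$\liminf$ comparisons are within $\eps$ of their limiting values and the $o(1)$ term in \eqref{L30} is bounded by $\eps$; (ii) on $|x|\ge R$, bound the right-hand side of \eqref{L30} by a negative multiple of the dominating growth function ($\phi^{drift}$ in Case~1, $\phi^{ball}_-$ in Case~2); (iii) invoke the power-type lower bound \eqref{gamdriftball} to turn this into $-K'|x|^{\gamma^{drift}}$ (resp.\ $-K'|x|^{\gamma^{ball}}$), and since $V(x) = 1+|x|^p$ for $|x|$ large, rewrite $|x|^{\gamma^{drift}} = (V(x)-1)^{\gamma^{drift}/p} \ge c\, V(x)^{\gamma^{drift}/p}$ for $|x|$ large, giving $\LL V(x) \le -f(V(x))$ there; (iv) on the compact set $|x|\le R$, use that $V\in\mathcal D_0$ (so $\LL V$ is finite) and $V$ is bounded on compacts to absorb everything into an additive constant $C$, yielding \eqref{Lyap2} globally.

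For Case~1, I would divide the right-hand side of \eqref{L30} by $\phi^{drift}(x)$. The $-p\zeta\phi^{drift}$ term contributes $-p\zeta$; the ``ball'' terms contribute, up to the $(1+o(1))$ factor, at most $\frac{p}{4}\bigl(\tilde C^{drift}_+ + (p-2)\tilde C^{drift}_-\bigr) + \eps'$ in the limsup sense (here one uses $p-2<0$, so the negative term $(p-2)\phi^{ball}_-$ is controlled from above using the \emph{liminf} $\tilde C^{drift}_-$ — this is exactly why $\tilde C^{drift}_-$ is defined with a liminf); the two ``tail'' terms contribute at most $C^{big}+C^{large}+\eps'$. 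Condition \eqref{LI1} says the sum of the limiting quantities is strictly negative, so for $\eps'$ small and $|x|$ large, $\LL V(x) \le -\theta\,\phi^{drift}(x)$ for some $\theta>0$, and then \eqref{gamdriftball} finishes it with $f(u)=Ku^{\gamma^{drift}/p}$.

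For Case~2, the dividing function is $\phi^{ball}_-(x)$. The ``ball'' block $\frac{p}{4}(1+o(1))(\phi^{ball}_+ + (p-2)\phi^{ball}_-)$ divided by $\phi^{ball}_-$ is at most $\frac{p}{4}(C^{ball}+p-2)+\eps'$ (using $C^{ball}=\limsup \phi^{ball}_+/\phi^{ball}_-$); the ``tail'' block is at most $2C^{tail}+\eps'$ since $\phi^{big}+\phi^{large}\le 2\max(\phi^{big},\phi^{large})$ and $C^{tail}$ compares that max to $\phi^{ball}_-$. In subcase (a), $\tilde C^{drift}_-=\infty$ forces $\phi^{drift}(x)/\phi^{ball}_-(x)\to 0$, so the drift term $-p\zeta\phi^{drift}$ divided by $\phi^{ball}_-$ vanishes regardless of the sign of $\zeta$, leaving exactly \eqref{LI2a}. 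In subcase (b), $\zeta\le 0$ makes $-p\zeta\phi^{drift}\ge 0$, so we need an \emph{upper} bound on $-p\zeta\,\phi^{drift}/\phi^{ball}_-$; writing $\phi^{drift}/\phi^{ball}_- = 1/(\phi^{ball}_-/\phi^{drift})$ and using $\liminf \phi^{ball}_-/\phi^{drift} = \tilde C^{drift}_- \in(0,\infty)$, the limsup of $\phi^{drift}/\phi^{ball}_-$ is $1/\tilde C^{drift}_-$, giving the term $-p\zeta/\tilde C^{drift}_-$ of \eqref{LI2b}. In both subcases, strict negativity plus \eqref{gamdriftball} for $\phi^{ball}_-$ yields $\LL V(x)\le -K'|x|^{\gamma^{ball}}\le -f(V(x))$ for $|x|$ large with $f(u)=Ku^{\gamma^{ball}/p}$.

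The main obstacle — and the only genuinely delicate point — is the bookkeeping with the signs in the ``ball'' term: because $p\in(0,1)$ the coefficient $p-2$ is negative, so $\phi^{ball}_-$ appears with a negative sign and must be bounded \emph{from above}, which is why one half of the comparison constants are liminfs while the others are limsups; one has to check that combining a limsup of one ratio with a liminf of another (they share the denominator or numerator) is legitimate, i.e.\ that $\limsup(a_x + b_x) \le \limsup a_x + \limsup b_x$ is applied in the correct direction after the rescaling. A secondary routine check is that the concave-extension requirement on $f$ in the Lyapunov condition is met: $f(u)=Ku^{\beta}$ with $\beta = \gamma^{drift}/p$ (resp.\ $\gamma^{ball}/p$) is increasing and concave on $[0,\infty)$ precisely when $\beta\le 1$, which one should note holds under the standing growth restrictions (or state as an implicit hypothesis, since the rate \eqref{psi} only makes sense for $f$ sublinear); everything else is the standard compact-set-absorption argument for the additive constant $C$.
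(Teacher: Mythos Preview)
Your proposal is correct and follows exactly the same route as the paper's proof: divide \eqref{L30} by $\phi^{drift}$ (Case~1) or $\phi^{ball}_-$ (Case~2), take $\limsup$ as $|x|\to\infty$, and read off \eqref{LI1}, \eqref{LI2a}, \eqref{LI2b}; the paper's version is terser and omits the bookkeeping (the $\eps$-arguments, the compact-set absorption for the constant $C$, and the concavity check on $f$) that you correctly identify as the routine details to fill in.
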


\begin{remark}
	In Case 2 we have purely ``ball''-induced ergodicity: in Case 2a) ``drift'' does not have any impact comparing to ``ball'' and in Case 2b) ``drift'' is positive and thus works even against. In Case 1 ergodicity comes from ``drift'' only, if $\tilde{C}^{drift}_- = 0$; otherwise both components contribute. 
\end{remark}

\section{Proofs}\label{Proofs}

\begin{proof}[Proof of Lemma \ref{lem1}]
	We already have the estimate of $\LL^{drift} V$ (see \eqref{drift}).  Consider now $\LL^{ball} V$. Since $|u| \in \Bb(|x|^\lambda)$, then for $|x| \gg 1$:
	\begin{equation*}
		|x+u| \geq |x|\cdot |1-|x|^{\lambda-1}| >1,
	\end{equation*}
and $V(x+u)=1+|x+u|^p$. Thus, we can apply Taylor's formula to the integrand function  of the first term of \eqref{Lsplit}  and get
\begin{equation*}\label{TF}
		V(x+u) - V(x) - \nabla V(x)\cdot u = \frac12 \int_0^1 (1-\theta) u'D^2V(x+\theta u)ud\theta ,
\end{equation*}
	where
	\begin{equation*}\label{DV}
		D^2V(x) = \left\{\partial_{ij}^2 V(x)\right\}_{i,j=1}^d=p|x|^{p-2}
		\begin{pmatrix}
			1+(p-2)\frac{x_1^2}{|x|^2} & (p-2)\frac{x_1 x_2}{|x|^2} & \cdots& (p-2)\frac{x_1 x_d}{|x|^2} \\ 
			(p-2)\frac{x_1 x_2}{|x|^2} & 1+(p-2)\frac{x_2^2}{|x|^2} & \cdots & (p-2)\frac{x_2 x_d}{|x|^2}\\
			\cdots & \cdots & \ddots & \cdots\\
			(p-2)\frac{x_1 x_d}{|x|^2} & (p-2)\frac{x_2 x_d}{|x|^2} & \cdots &  1+(p-2)\frac{x_d^2}{|x|^2}
		\end{pmatrix}.
	\end{equation*}
	Then
    \begin{gather*}
        \LL^{ball}V(x)
		= \frac12  \int\limits_{\Bb(|x|^\lambda)} \int\limits_0^1  (1-\theta) u' \cdot D^2V(x+\theta u) \cdot u \nu(x,du)d\theta =\\
		=\frac{p}{2} \int\limits_{\Bb(|x|^\lambda)}  \int\limits_0^1 (1-\theta)|x+\theta u|^{p-2}\left(\sum_{i=1}^{d}{\left(1+(p-2)\frac{(x_i+\theta u_i)^2}{|x+\theta u|^2}\right)u_i^2}\right.\,+ \notag\\
		\left.+(p-2)\sum_{i,j=1, i\neq j}^{d}{u_i u_j\frac{(x_i+ \theta u_i) (x_j+\theta u_j)}{|x+\theta u|^2}}\right)d\theta\nu(x, du).\notag
    \end{gather*}	
    
	Regroup the sums in the inner integral and apply the sum of squares formula to $u_i (x_i+\theta u_i)$:
    \begin{gather*}
        \LL^{ball}V(x) 
		=\frac{p}{2}  \int\limits_{\Bb(|x|^\lambda)}  \int\limits_0^1 (1-\theta)|x+\theta u|^{p-2} {\left(\sum_{i=1}^{d}{u_i^2}+\frac{p-2}{|x+\theta u|^2}\left(\sum_{i=1}^{d}{u_i (x_i+\theta u_i)}\right)^2\right)}d\theta\nu(x, du) =\\
		=\frac{p}{2}  \int\limits_{\Bb(|x|^\lambda)}  \int\limits_0^1 (1-\theta)|x+\theta u|^{p-2}|u|^2\Big(1+\underbrace{(p-2)\left(e_u\cdot e_{x+\theta u}\right)^2}_{\text{negative}}\Big)d\theta\nu(x, du). \notag
    \end{gather*}

	Consider the squared scalar product
    \begin{gather*}\label{dotp}
        (e_{u}\cdot e_{x+\theta u} )^2
			=  \left( \gamma_{x,u}\frac{|x|}{|x+\theta u|} +   \frac{\theta |u|}{|x+\theta u|} \right)^2
			\geq \gamma_{x,u}^2 \left(\frac{|x|}{|x|+ |u|}\right)^2 - \frac{2\theta |x| \cdot|u|}{(|x|-|u|)^2} \geq \\
            \geq 
			\frac{\gamma_{x,u}^2 }{(1+|x|^{\lambda-1})^2}- 
			\frac{2 |x|^\lambda}{|x|(1-|x|^{\lambda-1})^2}. \notag
    \end{gather*}
	
	Estimating the inner integral of $\LL^{ball}V$ depending upon its sign (again by triangle inequalities), we get
	\begin{equation*}\label{triangle}
		\frac{1}{2} (|x|\cdot|1+|x|^{\lambda-1}|)^{p-2} \leq \int\limits_0^1 (1-\theta)|x+\theta u|^{p-2}\, d\theta \leq \frac12 (|x|\cdot|1-|x|^{\lambda-1}|)^{p-2}.
	\end{equation*}

	Summarising, we derive the final estimate for the ``ball'' part:
    \begin{gather*}
        \LL^{ball}V(x)
		\leq \frac{p}{4} |x|^{p-2} \int\limits_{\Bb(|x|^\lambda)}|u|^2 \left(C_1(x)+ (p-2)C_2(x) \gamma^2_{x, u}\right)\nu(x,du) = \\
		= \frac{p}{4} C_1(x) \phi^{ball}_+(x) +  \frac{p(p-2)}{4} C_2(x) \phi^{ball}_-(x),
    \end{gather*}
	where  
	\begin{equation*}\label{C12}
		C_1(x) =\frac{1}{\left(1-|x|^{\lambda-1}\right)^{2-p}}  +  \frac{2(2-p) |x|^{\lambda-1}}{(1+|x|^{\lambda -1})^{2-p}(1-|x|^{\lambda -1})^2}, \quad 
		C_2(x) = \frac{1}{\left(1+ |x|^{\lambda-1}\right)^{4-p}}.
	\end{equation*} 
	Both $C_1(x)$ and $C_2(x)$ tend to 1 as $|x|$ tends to infinity; therefore for $|x|$ large enough:
	\begin{equation}\label{Lball}
		\LL^{ball} V(x)
		\leq  \frac{p}{4}(1+o(1))(\phi^{ball}_+(x)+(p-2)\phi^{ball}_-(x)). 
	\end{equation}

	Next, consider the ``tail'' part, beginning with the big jumps. Since by definition $V(x)$ is either $1+ |x|^p$ or $1 + \phi^p(x)$ (depending on the value of $|x|$), and $\phi(x) \leq |x|$, then $V(x+u) \leq 1+ |x+u|^p$, and
    \begin{gather*}
        \LL^{big} V(x)
		\leq \int\limits_{\Aa(|x|^\lambda, |x|)} (|x+u|^p-|x|^p-p|x|^{p-1}\cdot |u|\gamma_{ x,u})\, \nu(x,du) = \\
		= |x|^p \int\limits_{\Aa(|x|^\lambda, |x|)} \left(\left(1 + 2 \frac{|u|}{|x|}\gamma_{x,u} + \left(\frac{|u|}{|x|} \right)^2\right)^{\frac{p}{2}}  -1\right)\, \nu(x,du) \,-\\
		- \,p|x|^{p-1} \int\limits_{\Aa(|x|^\lambda, |x|)}|u|\gamma_{ x,u}\, \nu(x,du) =: J_1(x)+J_2(x).
    \end{gather*}
	
	Now using the fact that $u \in \Aa(|x|^\lambda, |x|)$, we derive
    \begin{gather*}
        J_1(x)
			\leq |x|^p \int\limits_{\Aa(|x|^\lambda, |x|)} \Big(\Big(1 + (1+2\gamma_{ x,u})\frac{|u|}{|x|} \Big)^{\frac{p}{2}}  -1\Big) \nu(x,du) \leq \\
			\leq \frac{p}{2}|x|^{p-1} \int\limits_{\Aa(|x|^\lambda, |x|)} (1+2\gamma_{ x,u})|u|\,\nu(x,du).
    \end{gather*}

	Summing $J_1(x)$ and $J_2(x)$, we obtain the final estimate of the big jumps part:
	\begin{equation}\label{big}
		\LL^{big} V(x) \leq \frac{p}{2}|x|^{p-1} \int\limits_{\Aa(|x|^\lambda, |x|)} |u| \nu(x,du) = \phi^{big}(x). 
	\end{equation}

	In the case of large jumps we again use that by definition $V(x+u) \leq 1 + |x+u|^p$. Then
	\begin{equation}\label{large}
		\LL^{large}V(x) \le \int\limits_{\Bb^c(|x|)} \left(|x+u|^p-|x|^p\right) \nu(x,du) \le \int\limits_{\Bb^c(|x|)} |u|^p \, \nu(x,du) = \phi^{large}(x).
	\end{equation}

	Together estimates \eqref{drift}, \eqref{Lball}, \eqref{big} and \eqref{large} yield the statement of the lemma, namely,
	\begin{equation*}
		\LL V(x)
		\leq  -p\zeta \phi^{drift}(x)+\frac{p}{4}(1+o(1))(\phi^{ball}_+(x)+(p-2)\phi^{ball}_-(x)) + \phi^{big}(x) + \phi^{large}(x), \,  |x| \to \infty.
	\end{equation*} \qedhere
\end{proof}

	Now, using the results of the proven lemma, we can easily prove Theorem \ref{T2}. 

	\begin{proof}[Proof of Theorem~\ref{T2}] 
		\textit{Case 1. ``Drift-induced'' ergodicity}. Suppose  that  \eqref{drift}  holds with  $\zeta \in (0,2]$. Dividing both sides of \eqref{L30} by $\phi^{drift}$ and taking $\limsup$ as $|x|\to \infty$, we get
		\begin{equation}\label{Ld}
			\limsup_{|x| \to \infty} \frac{\LL V(x)}{\phi^{drift}(x)}	 \leq - p\zeta + \frac{p}{4}\left(\tilde{C}^{drift}_+ +(p-2)\tilde{C}^{drift}_-\right)+ C^{big} + C^{large}.
		\end{equation}
		Then it is clear that the Lyapunov-type condition \eqref{Lyap2} is satisfied if $\tilde{C}^{drift}_+$, $\tilde{C}^{drift}_-$, $C^{big}$, $C^{large}$ are finite, and moreover the right-hand side of \eqref{Ld} is strictly negative.
		
		\textit{Case 2. ``Ball-induced'' ergodicity}. 
        
		 a) Assume that $\tilde{C}^{drift}_- = \infty$ (that is, ``ball'' is way faster than ``drift'') and $\zeta \in [-2,2]$. Dividing this time both sides of \eqref{L30} by $\phi^{ball}_-$ and taking $\limsup$ as $|x|\to \infty$, we get
		\begin{equation}\label{Ld1}
			\limsup_{|x| \to \infty} \frac{\LL V(x)}{\phi^{ball}_-(x)}	 \leq 	\frac{p}{4}(C^{ball} + p-2)+ 2C^{tail}.
		\end{equation}
		If $C^{ball}, C^{tail} < \infty$, and the right-hand side of \eqref{Ld1} is strictly negative, condition \eqref{Lyap2} is satisfied.
	
		b) Assume $\tilde{C}^{drift}_- \in (0,\infty)$ and $\zeta \in [-2,0]$. Following the same approach as above we get
		\begin{equation}\label{Ld2}
			\limsup_{|x| \to \infty} \frac{\LL V(x)}{\phi^{ball}_-(x)}	 \leq 	-\frac{p\zeta}{\tilde{C}^{drift}_-} + \frac{p}{4}(C^{ball} + p-2)+ 2C^{tail}.
		\end{equation}
		If $C^{ball}, C^{tail} < \infty$, and the right-hand side of \eqref{Ld2} is strictly negative, condition \eqref{Lyap2} holds.
		
		In each case, the form of $f$ from \eqref{Lyap2} (and thus of the ergodic rates $\psi$ from \eqref{psi}) is determined by \eqref{gamdriftball}.
		
	\end{proof}

\section{Examples}\label{Examples}

\begin{example}
	Consider a one-dimensional symmetric bounded $\nu(\cdot, \cdot)$ and suppose that the drift part $\ell(x)$ is absent. Then $\phi^{drift}$, $\phi^{big}$ and $\phi^{large}$ are all equal to zero for all $x  \in \real$,  and we are in the setting of Case 2a) with $C^{tail}=0$. Moreover, given that we are in the one-dimensional setting, $|\gamma_{ x,u}|=1$, thus, $\phi^{ball}_+ = \phi^{ball}_-$ and $C^{ball}=1$. Then \eqref{LI2a} is always satisfied, yielding the Lyapunov-type condition \eqref{Lyap2}.
\end{example}

\begin{example}

Consider now an $\alpha$-stable Lévy-type  measure $\nu(x,du) = |x|^\beta |u|^{-d-\alpha} du, \,  x\in \rd,$ with $\beta \in (1-p,1)$ and $\alpha \in (0,2)$. Take $\ell(x) = |x|^\eta$, $\eta\in (0, \beta)$, and assume that $\zeta \in [1/2, 1]$.

First, estimate 
\begin{equation*}
	\phi^{ball}_-(x) \leq \phi^{ball}_+(x) = \frac{S(d)}{2-\alpha} |x|^{p-2 + \beta +\lambda(2-\alpha)},	
\end{equation*}
where $S(d)$ is the volume of a unit sphere in $\rd$.

Next, to calculate the rest of the ``growth'' functions we define
\begin{equation*}\label{phiAnn}
	\phi^{ann}_\tau(x)
	:= |x|^{p-1+\beta} \int\limits_{\Aa(|x|^\tau,|x|)} |u|^{1-d-\alpha} \,du, \quad \tau \in \{0,\lambda\}. 
\end{equation*} 
Simple calculations lead us to
\begin{equation*}
	\phi^{ann}_\tau(x)   \asymp  S(d)
	\begin{cases} 
		(\alpha-1)^{-1}|x|^{ p-1+\beta+ (1-\alpha)\tau}, & \alpha>1,\\
		(1-\alpha)^{-1}|x|^{ p+\beta-\alpha}, & \alpha<1, \\
		(1-\tau)|x|^{ p-1 + \beta }\ln |x|,	& \alpha =1. 
	\end{cases}
\end{equation*} 
Then (remember that $\eta \in (0,\beta)$)
\begin{equation*}
	\phi^{drift}(x)  = \phi^{ann}_0(x), \qquad \phi^{big}(x)  = \frac{p}{2}\phi^{ann}_\lambda(x).
\end{equation*}

Finally,
$$
\phi^{large}(x) = \frac{S(d)}{\alpha - p} |x|^{p+\beta-\alpha}. 
$$

We can easily see that $\tilde{C}^{drift}_+$ (hence and $\tilde{C}^{drift}_-$) are equal to zero for every $\alpha \in (0,2)$. Furthermore, given that $\zeta \in [1/2,1]$, inequality \eqref{LI1} is automatically satisfied for $\alpha \ge 1$, thus, Lyapunov-type condition \eqref{Lyap2} holds with $f(u) = Cu^{\frac{p-1+\beta}{p}}$, $C \leq p\zeta$, for $\alpha > 1$, and with $f(u) = Cu^{\frac{p-1+\beta}{p}}\ln{u}$, $C \leq \frac{p}{2}(2\zeta-1+\lambda)$, for $\alpha = 1$. For $\alpha<1$, \eqref{Lyap2} holds with $f(u) = Cu^{\frac{p+\beta-\alpha}{p}}$, $C \leq p\left(\zeta-\frac{1}{2}\right)-\frac{1-\alpha}{\alpha-p}$, if $p$ satisfies the inequality
$$
	p^2(1-2\zeta)-p\alpha(1-2\zeta)-2(1-\alpha)>0.
$$
In each case, $C$ was derived directly from \eqref{LI1}.

\end{example}

\section*{Aknowledgement.} 
This research was partly funded by the ``DAAD Ostpartnerschaften Programme'' of the TU Dresden. The author is grateful to Faculty of Mathematics at TU Dresden and Prof.\ Ren\'e Schilling for providing excellent working conditions.

The author expresses deepest gratitude to her supervisor, Dr. Victoria Knopova, for her mentorship and constant encouragement.

The author also sincerely thanks Prof.\ Ren\'e Schilling for his valuable and insightful comments and suggestions.

\bibliographystyle{abbrv}

\bibliography{GeneralArxiv}

\end{document}